\theoremstyle{plain}
\newtheorem{theorem}{Theorem}[section]
\newtheorem{claim}[theorem]{Claim}
\newtheorem{lemma}[theorem]{Lemma}
\theoremstyle{definition}
\newcommand{\Pb}{\mathbb{P}}
\newcommand{\hide}[1]{}
\title{
\vspace{-0.7cm}
Long induced paths in $K_{s, s}$-free graphs}
\author{
Zach Hunter \thanks{Department of Mathematics, ETH Z\"urich, Switzerland. Email: {\tt \{zach.hunter, aleksa.milojevic, benjamin.sudakov\}@math.ethz.ch}. Research supported in part by SNSF grant 200021-228014.}
\and Aleksa Milojevi\'c\footnotemark[1] \and Benny Sudakov \footnotemark[1]
\and 
Istv\'an Tomon\thanks{Ume\r{a} University, \emph{e-mail}: \textbf{istvantomon@gmail.com}, Research supported in part by the Swedish Research Council grant VR 2023-03375.}}
\date{}
\begin{document}

\maketitle

\begin{abstract}
More than 40 years ago, Galvin, Rival and Sands showed that every $K_{s, s}$-free graph containing an $n$-vertex path must contain an induced path of length $f(n)$, where $f(n)\to \infty$ as $n\to \infty$. Recently, it was shown by Duron, Esperet and Raymond that one can take $f(n)=(\log \log n)^{1/5-o(1)}$. In this note, we give a short self-contained proof that a $K_{s, s}$-free graph with an $n$-vertex path contains an induced path of length at least $(\log \log n)^{1-o(1)}$. Combined with the recent remarkable example of Cou\"etoux, Defrain, and Raymond, which provides an upper bound of $O((\log \log n)^{1+o(1)})$, this essentially resolves this old problem.
\end{abstract}

\section{Introduction}

In 1982, Galvin, Rival and Sands \cite{GRS} showed that every infinite graph $G$ with a Hamilton path has either arbitrarily long induced paths or it contains the infinite half-graph as a subgraph. Motivated by this result, they asked the following finitary problem: given a graph $G$ that does not contain the complete bipartite graph $K_{s,s}$, and which contains a path on $n$ vertices, how long of an induced path must $G$ contain?
They gave a proof showing that every such $G$ contains an induced path of length at least $\Omega\big((\log\log\log n)^{1/3}\big)$ (here and later, the $\Omega(.)$, $O(.)$, and $o(.)$ notations assume that $s$ is a fixed constant). 

This statement was rediscovered some 30 years later, with worse quantitative bounds, by Atminas, Lozin and Razgon \cite{ALR}, in the context of parameterized complexity of the biclique problem. Very recently, Duron, Esperet and Raymond \cite{DER} improved upon the bounds of \cite{GRS}, showing that a $K_{s, s}$-free graph $G$ with an $n$-vertex path must contain an induced path of length at least $(\log\log n)^{1/5-o(1)}$. Recall, a graph is called $K_{s, s}$-free if it does not contain $K_{s, s}$ as a (not necessarily induced) subgraph. The purpose of our paper is to give a very short proof of the following improved lower bound. 

\begin{theorem}\label{thm:main}
Let $n\geq s\geq 2$ be positive integers and let $G$ be a $K_{s, s}$-free graph containing a path on $n$ vertices. Then $G$ contains an induced path of length at least $\Omega \big(\frac{\log \log n}{\log\log\log n}\big)$.
\end{theorem}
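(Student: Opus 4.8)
The plan is to pass to $G[V(P)]$, use the Kővári--Sós--Turán bound on its number of edges, and then build a long induced path through a recursion in which the ambient path of length $n$ is repeatedly replaced by a much shorter path while the induced path being built grows by one vertex.

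Since an induced path of $G[V(P)]$ is also an induced path of $G$, I may assume $V(G)=V(P)$, so $|V(G)|=n$ and, as $G$ is $K_{s,s}$-free, $e(G)=O(n^{2-1/s})$ by Kővári--Sós--Turán. The engine of the argument is a greedy observation: in any $K_{s,s}$-free graph $H$ with a path $w_1w_2\cdots w_m$, start at $w_1$ and repeatedly move to the right-most neighbour of the current vertex, obtaining vertices $w_{i_1}=w_1,w_{i_2},\dots,w_{i_T}=w_m$ with $i_1<i_2<\cdots<i_T$. This is an \emph{induced} path: consecutive $w_{i_a},w_{i_{a+1}}$ are adjacent by construction, while if $b\ge a+2$ then $i_b>i_{a+1}=\max\{j:w_{i_a}w_j\in E(H)\}$, so $w_{i_a}w_{i_b}\notin E(H)$. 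The same computation shows that every $w_{i_\ell}$ with $\ell<k$ has no neighbour of index exceeding $i_k$. Hence if $T$ is at least our target we are done; otherwise some step is a long jump, so there is $k$ with $w_a:=w_{i_k}$ whose right-most neighbour $w_b$ satisfies $b-a\ge (m-1)/T$.

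The recursive step is then: cut out the subpath $R=w_{a+1}\cdots w_{b-1}$ of length $\ge (m-1)/T-1$ and, among the gaps into which the neighbours of $w_a$ split $R$, pick a subpath $Q'$ one of whose endpoints $w$ is a neighbour of $w_a$ and whose remaining vertices avoid $N(w_a)$. Appending to the prefix $w_{i_1},\dots,w_{i_k}$ first $w$, and then an induced path of $H[V(Q')]$ with $w$ as an endpoint, produces an induced path of $H$ that is one vertex longer than the one found recursively: the prefix has no neighbour of index $>i_k=a$, hence none in $V(Q')$, and $w_a$ is joined inside $Q'$ only to $w$. So we have reduced to the same problem for $H[V(Q')]$, with $w$ prescribed as an endpoint, on a path of length roughly $|R|/\deg_R(w_a)$, having gained a vertex — and one iterates, concatenating all the prepended vertices at the end. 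The base case is trivial.

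The crux, and the place where the extra $\log\log\log n$ factor is lost, is \emph{degree control}: a single vertex of a $K_{s,s}$-free graph containing a long path can have degree linear in $m$, so one cannot simply pass to a bounded-degree subpath, and if the prescribed endpoint $w$ or the next pivot $w_a$ has enormous degree then the length loss at the following step is uncontrolled. The remedy is to choose $Q'$, and hence the next prescribed endpoint, to have small degree \emph{inside} $Q'$: the subpaths attached to the various gaps of $w_a$ in $R$ are essentially disjoint, so the degrees of their $N(w_a)$-endpoints inside those subpaths sum to at most $e(H[V(R)])=O(|V(R)|^{2-1/s})$; combining this with the Kővári--Sós--Turán bound and with the fact that a $K_{s,s}$-free graph on $N$ vertices has independence number $\Omega(N^{1/s})$ (used to prune the neighbourhood of a high-degree vertex down to a well-spread set before choosing the gap), one can keep $|V(Q')|$ polynomially large while keeping the new prescribed endpoint's degree small enough to run the next step. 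Tracking these two quantities and optimising the parameters, each level of the recursion decreases $\log m$ by a factor of order $\log\log n$ — equivalently, replaces $m$ by about $m^{1/\log\log n}$ — so the recursion survives $\Omega\!\big(\tfrac{\log\log n}{\log\log\log n}\big)$ levels before $m$ drops to a constant, yielding an induced path of that length. I expect the degree-control argument to be the main obstacle to making this rigorous and clean.
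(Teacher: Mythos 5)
Your overall scheme (a greedy ``rightmost-neighbour'' walk along the Hamilton path, then recursing into a gap of the pivot's neighbourhood) is the classical approach of Galvin--Rival--Sands and Duron--Esperet--Raymond, and it is genuinely different from the paper's proof, which splits on whether $G$ is $d$-degenerate for $d=(\log\log n)^{2s}$ and handles the two cases by BFS in an acyclic orientation (Lemma~\ref{lemma:degeneracy}) and by a random walk in the stationary distribution (Lemmas~\ref{lemma:sparse codegree graph} and~\ref{lemma:min degree}), respectively. However, the step you yourself flag as the crux --- degree control --- is a genuine gap, and the tools you invoke do not close it. First, the quantity that determines the size of the largest gap at a given level is $\deg_R(w_a)$, the degree of the \emph{pivot} inside $R$; the pivot $w_{i_k}$ is reached by iterating ``rightmost neighbour'' from the prescribed endpoint and is in general not that endpoint, so controlling the degree of the new endpoint $w$ inside $Q'$ does not control the quantity you need at the next level. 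Second, the bound you quote for the sum of the gap-endpoints' degrees inside their gaps is vacuous: the gaps are disjoint and $\deg_{Q'}(w)\le |Q'|$ trivially, so that sum is at most $|R|$, which is already smaller than $O(|R|^{2-1/s})$; and if you instead bound degrees in all of $R$, the average over the $D$ gap-endpoints is $O(|R|^{2-1/s}/D)$, which exceeds the typical gap size $|R|/D$ by a factor of $|R|^{1-1/s}$, so averaging does not produce a gap that is simultaneously large and has an endpoint of degree far below the gap size, which is what the next level requires. Third, pruning $N(w_a)\cap R$ to an independent or ``well-spread'' subset before choosing the gap destroys inducedness: discarded neighbours of $w_a$ may lie inside the chosen gap $Q'$, giving $w_a$ edges into $Q'$ other than $w_aw$.

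Consequently the asserted per-level loss $m\mapsto m^{1/\log\log n}$, which is exactly what would yield $\Omega\big(\tfrac{\log\log n}{\log\log\log n}\big)$ levels, is not established; this is precisely the point at which the known implementations of this recursion degrade (Duron, Esperet and Raymond obtain only $(\log\log n)^{1/5-o(1)}$ along these lines). As written, your proposal is an accurate description of the obstacle rather than a proof of the theorem. To reach the stated bound you would need either a genuinely new mechanism controlling the degrees of all pivots arising along the greedy walk, or a switch to an argument of the paper's type, in which the dense case is handled globally: there, once the minimum degree is at least $d$, a random walk of length $d^{1/2s}/2s$ drawn from the stationary distribution is shown to be induced with positive probability, with no recursion along the Hamilton path at all.
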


\noindent
This theorem is nearly optimal, and, together with the remarkable example of Cou\"etoux, Defrain and Raymond, it essentially resolves the problem of Galvin, Rival and Sands. More precisely, Cou\"etoux, Defrain and Raymond \cite{CDR} (building on the earlier work of Defrain and Raymond \cite{DR}) show the existence of $2$-degenerate $n$-vertex graphs with a Hamilton path in which all induced paths have length $O(\log\log n\cdot \log\log\log n)$. Note that $2$-degenerate graphs are also $K_{3,3}$-free. 

Determining the length of longest induced paths in hereditary classes of graphs has been the subject of many recent papers, see \cite{DER} for an excellent overview of the results related to this problem. A result of Ne\v{s}et\v{r}il and Ossona de Mendez \cite{NOdM} addresses the class of $d$-degenerate graphs, showing that a $d$-degenerate graph must contain an induced path of length $\Omega(\log \log n/\log d)$. As mentioned above, this bound cannot be improved beyond $\log\log n\cdot \log\log \log n$ already for $d=2$ by a construction of Cou\"etoux, Defrain and Raymond \cite{CDR}.

More generally, the study of long induced paths in graphs with structural properties fits well into the broader line of research about induced subgraphs of $K_{s, s}$-free or $K_s$-free graphs. The assumption that the host graph $G$ is $K_{s, s}$-free or $K_s$-free is a natural one, since it forbids the host graph from being a complete graph, which contains no nontrivial induced subgraphs. An example of this type of problem is the old question of Erd\H{o}s, Saks and S\'os \cite{ESS}, which asks to determine the maximum size of an induced tree in a connected $K_s$-free graph. This problem exhibits an interesting transition between $s=3$ and $s\geq 4$, since the largest induced tree in a $K_3$-free graph is of size $\Omega(\sqrt{n})$, while for $s\geq 4$ there are $K_s$-free graphs without induced trees on more than $O(\log n)$ vertices, see \cite{FLS}. 

The systematic study of induced subgraphs of $K_{s, s}$-free graphs was initiated by the authors of this note in the recent paper \cite{HMST}, posing the following general Tur\'an-type question. Given a fixed graph $H$, what is the maximum number of edges in a $K_{s, s}$-free graph $G$ on $n$ vertices which does not contain $H$ as an induced subgraph? If $s$ and $H$ are fixed, it is shown for several interesting classes of graphs, such as trees or cycles, that this maximum number of edges cannot exceed the usual extremal number ${\rm ex}(n, H)$ by more than a constant factor depending on $s$ and $H$, and it was conjectured that this phenomenon holds for all bipartite graphs $H$.

Finally, let us mention that the study of $K_{s, s}$-free graphs without certain induced subgraphs also comes up naturally in geometry. Namely, incidence or intersection graphs of various geometric objects often avoid certain induced subgraphs for geometric reasons, and thus the maximum number of edges needed to find a $K_{s, s}$ can be studied through the methods of graph theory. For more detailed discussions of this subject, see the excellent survey of Smorodinsky \cite{S}.

\section{Long induced paths}

In this section, we prove Theorem \ref{thm:main} after a few preliminary lemmas. Given a graph $G$ and $v\in V(G)$, $N(v)$ denotes the neighbourhood of $v$, $\deg(v)=|N(v)|$ denotes the degree of $v$ and $e(G)=|E(G)|$ denotes the number of edges of $G$.

\begin{lemma}\label{lemma:sparse codegree graph}
Let $d, s\geq 2$ be integers and $t\geq 1$ be a real number satisfying $t\leq d^{1/s}/s$. If $G$ is a $K_{s, s}$-free graph of minimum degree at least $d$ and $v$ is a vertex of $G$, then less than $st$ vertices $u\in V(G)$ satisfy $|N(u)\cap N(v)|\geq \frac{4}{t} |N(v)|$.
\end{lemma}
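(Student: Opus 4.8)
The plan is to argue by contradiction, using the K\H{o}v\'ari--S\'os--Tur\'an double counting method. Suppose that at least $st$ vertices $u$ satisfy $|N(u)\cap N(v)|\geq \frac{4}{t}|N(v)|$. Write $A:=N(v)$, so $|A|=\deg(v)\geq d$, and fix a set $U$ of $m:=\lceil st\rceil$ such vertices (which exists since the number of such $u$ is an integer that is at least $st$). I would then count the pairs $(S,a)$ where $S\in\binom{U}{s}$ and $a\in A$ is adjacent to every vertex of $S$. On the one hand, this count equals $\sum_{a\in A}\binom{|N(a)\cap U|}{s}$.

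Next, double count the edges between $U$ and $A$: we have $\sum_{a\in A}|N(a)\cap U|=\sum_{u\in U}|N(u)\cap A|\geq m\cdot\frac{4}{t}|A|$, so the average of $|N(a)\cap U|$ over $a\in A$ is at least $\frac{4m}{t}\geq 4s$. Since $x\mapsto\binom{x}{s}$, extended to equal $0$ for $x\leq s-1$, is convex on $\mathbb{R}$ and increasing for $x\geq s-1$, Jensen's inequality gives $\sum_{a\in A}\binom{|N(a)\cap U|}{s}\geq|A|\binom{4m/t}{s}\geq d\binom{4s}{s}$.

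On the other hand, if \emph{every} $S\in\binom{U}{s}$ had fewer than $2s$ common neighbours in $A$, then the number of pairs would be less than $2s\binom{m}{s}$. Using $m\leq st+1\leq\frac{3}{2}st$ (valid since $st\geq 2$) together with the hypothesis $t\leq d^{1/s}/s$, hence $t^s\leq d/s^s$, one bounds $2s\binom{m}{s}\leq\frac{2s}{s!}m^s\leq\frac{2s(3/2)^s}{s!}\,d<4^s d\leq d\binom{4s}{s}$, where the inequality $\frac{2s(3/2)^s}{s!}<4^s$ (equivalently $2s<(8/3)^s s!$) and $\binom{4s}{s}\geq 4^s$ are elementary and hold for all $s\geq 2$. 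This contradicts the lower bound of the previous paragraph. Hence some $S\in\binom{U}{s}$ has at least $2s$ common neighbours $T\subseteq A$; as $|S|=s$ and $|T|\geq 2s$, the set $S$ together with any $s$-subset of $T\setminus S$ consists of $2s$ distinct vertices spanning all $s^2$ edges between them, i.e.\ a copy of $K_{s,s}$ in $G$, contradicting that $G$ is $K_{s,s}$-free.

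The only genuinely delicate points are (i) checking that the numerical inequality $2s\binom{m}{s}<d\binom{4s}{s}$ really follows from the hypothesis $t\leq d^{1/s}/s$ --- this is precisely where the exponent $1/s$ is needed --- and (ii) the possible overlap between $U$ and $A$, which is why one asks for $2s$ rather than $s$ common neighbours, so that enough of them survive outside $S$. Both are routine once set up as above, and the convexity/monotonicity facts about the (polynomially extended) binomial coefficient are standard.
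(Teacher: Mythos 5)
Your proof is correct and follows essentially the same route as the paper: both are K\H{o}v\'ari--S\'os--Tur\'an-type arguments applied to the bipartite graph between the set $U$ of high-codegree vertices and $N(v)$. The only difference is presentational --- you carry out the KST double counting from scratch and handle the overlap $U\cap N(v)$ by demanding $2s$ common neighbours, whereas the paper removes the overlap up front (passing to $N(v)\setminus U$) and invokes the KST theorem as a black box.
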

\begin{proof}
Suppose for the sake of contradiction that there exists a set $U$ of size $|U|=st$ such that for every $u\in U$, we have $|N(u)\cap N(v)|\geq \frac{4}{t} |N(v)|$. Consider the bipartite subgraph $H$ of $G$ induced between the sets $U$ and $N(v)\backslash U$.

Each vertex of $U$ has at least $\frac{4}{t}|N(v)|-|U|\geq \frac{2}{t}|N(v)|$ neighbours in $N(v)\backslash U$. Here, we used that $|U|=st\leq \frac{2}{t}d\leq \frac{2}{t}|N(v)|$, which follows from the the assumptions $t\leq d^{1/s}/s$ and $s\geq 2$. Hence, the number of edges in $H$ is at least $|U|\cdot \frac{2}{t}|N(v)|$. On the other hand, the K\H{o}v\'ari-S\'os-Tur\'an theorem implies that in a $K_{s, s}$-free bipartite graph with parts of size $m=|U|$ and $n=|N(v)\backslash U|$, one can have at most $(s-1)^{1/s} mn^{1-1/s}+(s-1)n$ edges (see e.g. Theorem 2.2 in Chapter VI of \cite{B}). In our case, 
\begin{equation}\label{eqn:KST}
    \frac{2}{t} |U| |N(v)|\leq e(U, N(v)\backslash U)< s |U||N(v)|^{1-1/s} + s|N(v)|.
\end{equation}
But  $\frac{1}{t}|U||N(v)|\geq s |U| |N(v)|/d^{1/s}\geq s |U||N(v)|^{1-1/s}$, since $|N(v)|\geq d$ and $\frac{1}{t} |U| |N(v)|=s|N(v)|$. This contradicts (\ref{eqn:KST}), finishing the proof.
\end{proof}

\begin{lemma}\label{lemma:min degree}
Let $G$ be a $K_{s, s}$-free graph of minimum degree at least $d$. Then $G$ contains an induced path of length at least $d^{1/{2s}}/2s$.
\end{lemma}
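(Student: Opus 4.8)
The plan is to build the induced path greedily, adding one vertex at a time while carrying a large reservoir of legal continuations. Put $t:=d^{1/(2s)}/s$. We may assume that $d$ exceeds a suitable constant depending only on $s$, so that $8\le t\le d^{1/s}/s$; for smaller $d$ the claimed bound $d^{1/(2s)}/(2s)$ is a constant, which is verified directly. We construct an induced path $v_1,v_2,\dots$ together with sets $W_k\subseteq N(v_k)$ whose elements are adjacent to $v_k$, non-adjacent to each of $v_1,\dots,v_{k-1}$, and distinct from $v_1,\dots,v_k$; so appending any $w\in W_k$ keeps the path induced. The aim is to maintain $|W_k|\ge d/4$, which keeps the process alive.

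Given $v_1,\dots,v_k$ and $W_k$, we extend as follows. By Lemma~\ref{lemma:sparse codegree graph} applied with $v=v_i$, for each $i\le k$ there are fewer than $st$ vertices $u$ with $|N(u)\cap N(v_i)|\ge \tfrac4t|N(v_i)|$; discard all of these from $W_k$. At most $kst<d/4\le|W_k|$ vertices are removed, so the remainder $W_k'$ is nonempty; pick $v_{k+1}\in W_k'$ and set
\[
W_{k+1}:=N(v_{k+1})\setminus\Big(\{v_1,\dots,v_{k+1}\}\cup\textstyle\bigcup_{i\le k}N(v_i)\Big).
\]
This has the properties required at stage $k+1$, and since $v_{k+1}$ survived the trimming,
\[
|W_{k+1}|\ \ge\ \deg(v_{k+1})-\sum_{i=1}^{k}\big|N(v_{k+1})\cap N(v_i)\big|-(k+1)\ >\ \deg(v_{k+1})-\frac4t\sum_{i=1}^{k}\deg(v_i)-(k+1).
\]

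The main obstacle — and the reason one loses a factor of two in the exponent — is that $\deg(v_i)$ can be far larger than $d$, so the error term $\tfrac4t\sum_{i\le k}\deg(v_i)$ need not be small compared with $\deg(v_{k+1})$; in the extreme case $N(v_{k+1})$ is essentially absorbed by $\bigcup_{i\le k}N(v_i)$ and $W_{k+1}$ collapses. I would handle this by always choosing $v_{k+1}$ of maximum degree in $W_k'$, which forces one of two outcomes. Either the degrees double at each step, i.e.\ $\deg(v_{k+1})\ge 2\deg(v_i)$ for all $i\le k$, in which case $\sum_{i\le k}\deg(v_i)<\deg(v_{k+1})$, so $|W_{k+1}|\ge\deg(v_{k+1})(1-\tfrac4t)-(k+1)\ge d/4$ (using $t\ge8$) and the construction continues — iterating this quickly produces a long induced path. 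Or at some step the largest degree occurring in $W_k'$ is below $2\max_{i\le k}\deg(v_i)$; then a constant fraction of $N(v_k)$ forms a large, essentially regular, well-spread cluster of bounded-degree vertices, each having small codegree with every $v_i$. In that case I would restart the construction inside this cluster: its vertices still have at least $d$ neighbours in $G$, of which — by the codegree bound, at least while $\deg(v_k)$ stays moderate — most lie outside $N[v_k]$, so one obtains a $K_{s,s}$-free configuration of comparable minimum degree to recurse on, and concatenates the resulting path with $v_1\cdots v_k$. Carrying out this dichotomy cleanly, and controlling the parameters through the "restart in the cluster" branch so that neither the exponent nor the constant $\tfrac1{2s}$ degrades, is the delicate point.
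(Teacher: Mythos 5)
There is a genuine gap. Your first branch (degrees at least doubling at every step) is the easy, atypical case; the whole difficulty of the lemma sits in your second branch, and what you write there is a hope rather than an argument. Concretely: Lemma~\ref{lemma:sparse codegree graph} applied at $v=v_i$ only guarantees $|N(u)\cap N(v_i)|<\frac{4}{t}|N(v_i)|$ for the surviving $u$, and this is useless once $\deg(v_i)$ is much larger than $t\cdot\deg(u)$ --- nothing in the hypotheses bounds $\deg(v_i)$ from above. The ``cluster'' you propose to restart in is not an induced subgraph of minimum degree comparable to $d$: its vertices have $d$ neighbours in $G$, not inside the cluster, and no subgraph on which to recurse is actually exhibited. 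Worse, concatenating the recursively found path with $v_1\cdots v_k$ requires every vertex of the new path to be non-adjacent to $v_1,\dots,v_{k-1}$ and to $v_k$ except at the splice point, which the recursion does not control --- this is exactly the original difficulty reappearing one level down. You flag this step as ``the delicate point''; it is in fact the entire problem, and the greedy reservoir scheme appears genuinely hard to push through when high-degree vertices are present.

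The paper circumvents all of this with a different mechanism: take a length-$k$ random walk $v_1,\dots,v_k$ started from the stationary distribution (each subsequent vertex uniform in the neighbourhood of the current one). The key point is reversibility: $(v_i,v_j)$ and $(v_j,v_i)$ are identically distributed, so one can always arrange to apply the codegree bound of Lemma~\ref{lemma:sparse codegree graph} \emph{at the vertex whose neighbourhood is being sampled}, i.e.\ relative to $|N(v_{j-1})|$ when choosing $v_j$ --- precisely the direction in which the $\frac{4}{t}|N(v)|$ bound is effective. The exceptional event that some $v_{j-1}$ has large codegree with some earlier $v_i$ is controlled via the outdegree bound in an auxiliary digraph together with the same symmetry, and a union bound shows the walk is an induced path with positive probability. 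I would recommend abandoning the greedy construction in favour of this random-walk argument, which sidesteps the degree-imbalance issue entirely.
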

\begin{proof}
Fix $k=\lceil \frac{1}{2s}d^{1/{2s}}\rceil$ and assume $k\geq 4$ since the statement is trivial otherwise. We pick a random walk $v_1, \dots, v_k$ according to the stationary distribution. Namely, the starting vertex $v_1$ is chosen at random from $V(G)$ such that $\Pb[v_1=w]=\frac{\deg w}{2e(G)}$ for all $w\in V(G)$, and each subsequent vertex $v_{i+1}$ is chosen uniformly at random from the neighbours of $v_i$. Note that this choice ensures that $(v_i,v_j)$ has the same distribution as $(v_j,v_i)$ for any $1\leq i<j\leq k$ (this follows since the random variables $v_1, \dots, v_k$ are identically distributed and the walk $(v_1, \dots, v_k)$ has the same distribution as its reverse $(v_k, \dots, v_1)$). We show that the walk $v_1, \dots, v_k$ is an induced path with positive probability. 

Define an auxiliary directed graph $H$ on the vertex set $V(G)$, where $v \to u$ if $|N(u)\cap N(v)|\geq \frac{4s}{d^{1/s}} |N(v)|$. Lemma~\ref{lemma:sparse codegree graph} applied with $t=d^{1/s}/s$ shows that the outdegree of a vertex $v$ in $H$ is at most $d^{1/s}$. Next, we estimate the probability that there is no edge $v_j\rightarrow v_i$ for any $1\leq i<j\leq k$ in $H$, and we denote this event by $E$. By the union bound, we have 
\[\Pb\big[\,\overline{E}\,\big]=\Pb[v_j\to v_i\text{ for some } 1\leq i<j\leq k]\leq \sum_{1\leq i<j\leq k} \Pb[v_j\to v_i].\]
Recall that the pair $(v_i,v_j)$ has the same distribution as $(v_j,v_i)$, so $\Pb[v_j\to v_i]=\Pb[v_i\to v_j]$. Since $v_j$ is chosen uniformly from $N_G(v_{j-1})$, which is a set of size at least $d$, and the outdegree of $v_i$ in $H$ is at most $d^{1/s}$, we find that $\Pb[v_i\to v_j]\leq \frac{d^{1/s}}{d}=d^{1/s-1}$. Thus,  $\Pb[\overline{E}]\leq \binom{k}{2} d^{1/s-1}\leq k^2d^{1/s-1}$.

The next step is to estimate the probability that $v_1, \dots, v_k$ is an induced path. The walk $v_1, \dots, v_k$ is an induced path if and only if there are no edges between $v_i$ and $v_j$ for $i<j-1$. Indeed, since $k\geq 4$, the latter condition implies that the vertices $v_1, \dots, v_k$ are all distinct. Fixing some $1\leq i<j-1\leq k-1$, our main observation is that $\Pb[v_iv_j\in E(G) | v_{j-1}\not\to v_{i}]\leq 4sd^{-1/s}.$ Indeed, if $v_{j-1}\not \to v_{i}$ in $H$, then at most $\frac{4s}{d^{1/s}}|N(v_{j-1})|$ neighbours $v_j\in N(v_{j-1})$ form an edge of $G$ with $v_i$. As a consequence, we have $$\Pb[v_iv_j\in E(G)\text{ and } E]\leq \Pb[v_iv_j\in E(G) | E]\leq 4sd^{-1/s}.$$

We now complete the proof. If $v_1, \dots, v_k$ is not an induced path in $G$, then either $E$ does not happen, or there exist some $i<j-1$ for which $v_iv_j\in E(G)$  and  $v_{j-1}\not\to v_{i}$ in $H$. Since the number of such pairs $(i, j)$ is bounded by $\binom{k}{2}$, and we have $k=\frac{1}{2s}d^{1/{2s}}$, $s\geq 2$, we conclude that
\begin{align*}
\Pb[v_1, \dots, v_k\text{ is not an induced path in }G]&\leq \Pb[\overline{E}]+\sum_{1\leq i<j-1\leq k-1}\Pb[v_iv_j\in E(G) \text{ and }E]\\
&\leq k^2 d^{1/s-1} + 4s\binom{k}{2}d^{-1/s}\leq \frac{1}{4s^2} d^{2/s-1} + \frac{1}{2s}<1. \qedhere
\end{align*}
\end{proof}

\noindent
Finally, we give the proof of the following lemma of Ne\v set\v ril and Ossona de Mendez \cite{NOdM} for completeness. Recall that a graph $G$ is \emph{$d$-degenerate} if every subgraph of $G$ contains a vertex of degree $\leq d$.

\begin{lemma}\label{lemma:degeneracy}
Let $G$ be a $d$-degenerate graph on $n$ vertices containing a Hamilton path. Then $G$ contains an induced path of length $\Omega\big(\frac{\log \log n}{\log d}\big)$. 
\end{lemma}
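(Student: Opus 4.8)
The plan is to exploit the $d$-degeneracy of $G$ together with the Hamilton path in a recursive fashion, reducing to Lemma~\ref{lemma:min degree} once we have extracted a piece of $G$ with large minimum degree. Concretely, I would set $m = \lceil n^{1/2} \rceil$ (or some similar power) and split the Hamilton path $P$ into consecutive blocks $P_1, P_2, \dots$ each of roughly $m$ vertices, so that there are about $n/m \approx m$ blocks. Each block $P_i$ is a subpath on $\Theta(m)$ vertices, hence it is again a $d$-degenerate graph with a Hamilton path on $\Theta(m)$ vertices; if we could find a long induced path inside a single block we would be done by induction. The point of having many blocks is that the induced paths coming from different blocks can be \emph{chained together}: if blocks $P_i$ and $P_j$ are far apart along $P$, the union of an induced path in $P_i$ and one in $P_j$, connected by a subpath of $P$, is close to being an induced path — the only obstruction is edges of $G$ running between the two blocks.

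The key step is therefore to show that, because $G$ is $d$-degenerate, there are not too many edges between blocks, so that most pairs of blocks are ``independent'' and can be concatenated. Since a $d$-degenerate graph on $N$ vertices has at most $dN$ edges, there are at most $dn$ edges total, hence at most $O(dn/m) = O(dm)$ edges leaving a typical block; by an averaging/pigeonhole argument one can find a large collection of blocks that are pairwise non-adjacent in $G$ (or adjacent in only a controlled way). Chaining together induced paths from $r$ such mutually independent blocks, each of length $\ell$, through segments of the Hamilton path, yields an induced path of length roughly $r\ell$ — after possibly deleting $O(1)$ vertices near each junction to kill the few remaining bad edges. Unfolding the recursion: at each level we roughly square the ``$\log$'' of the size while multiplying the length by a constant factor, which after $\Theta(\log\log n / \log d)$ levels (the recursion bottoms out when the block size drops to a constant, or alternatively when the minimum degree is large enough to invoke Lemma~\ref{lemma:min degree}) gives an induced path of length $\Omega\big(\tfrac{\log\log n}{\log d}\big)$.

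An alternative and cleaner route, which I would actually pursue to avoid messy bookkeeping, is the following direct recursion. Let $h(N)$ denote the guaranteed induced-path length in a $d$-degenerate graph on $N$ vertices with a Hamilton path. Partition $P$ into $K$ blocks of size $N/K$. The auxiliary graph on the $K$ blocks, where two blocks are joined if $G$ has an edge between them, has at most $dN$ edges, hence at most $dN/\binom{K}{2}\cdot$(something) — choosing $K \approx \sqrt{dN}$ forces this auxiliary graph to have an independent set, or rather an induced path in the block-graph, of size $\Omega(K/d) $; but more simply, choosing $K$ so that $dN < \binom{K}{2}$ is impossible, so instead one picks out $\Omega(\sqrt{N/d})$ blocks spanning no edges by a greedy argument on the degeneracy ordering. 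Each surviving block still has a Hamilton path on $N/K$ vertices, so contains an induced path of length $h(N/K)$; stringing $\Omega(K/\mathrm{poly}(d))$ of them together along $P$ and trimming a bounded number of vertices per junction gives $h(N) \geq c\, h(N/K)$ for $K = N^{1/2}$, say, provided $N$ is at least some polynomial in $d$. This recursion has depth $\Theta(\log\log N)$; a more careful choice (shrinking by a $d$-dependent amount, so that $\log(N/K) \approx \log(N)/ \Theta(\log d)$ is wrong — rather, we lose a constant factor in $\log N$ each step) tuned so the size parameter $\log N$ drops by a constant factor at each step after $\Theta(\log\log n/\log d)$ steps reaches $\mathrm{poly}(d)$, at which point Lemma~\ref{lemma:min degree} (applied after passing to a high-min-degree subgraph, which exists once $N \gg d$) or a trivial bound finishes.

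The main obstacle I anticipate is handling the edges between the chosen blocks: one must ensure both that enough blocks survive the ``independent set'' selection (this is where $d$-degeneracy, rather than mere sparsity, is essential, since we need to control edges \emph{between} blocks, not just the average degree) and that the small number of edges incident to the junction regions can be removed by deleting only $O(1)$ vertices per junction without destroying too much of the length. Getting the exponent right — i.e.\ verifying that the recursion genuinely contributes a factor $\Theta(\log d)$ in the denominator and not, say, $\Theta(d)$ — requires choosing the block count $K$ as the right power of $N$ and tracking how $\log N$ decreases; this is the quantitatively delicate point, but it is routine once the qualitative chaining argument is set up.
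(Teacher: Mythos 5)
Your proposal takes a block-decomposition/chaining route that is genuinely different from the paper's argument, but it has a fatal gap at the chaining step, in two places. First, the claim that one can find many blocks ``spanning no edges'': $d$-degeneracy only bounds the \emph{number} of edges of $G$ by $dn$, and says nothing about how they distribute among pairs of blocks. With $K=\sqrt{n}$ blocks there are $\binom{K}{2}\approx n/2$ pairs, so already a $3$-degenerate graph (Hamilton path plus roughly one extra edge per vertex) can place a cross edge between \emph{every} pair of blocks; the block graph is then complete and no two blocks are independent. Contracting connected pieces of a degenerate graph can produce arbitrary graphs, so no greedy argument on the degeneracy ordering rescues this. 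Second, even for two genuinely independent blocks, connecting their internal induced paths ``by a subpath of $P$'' does not work: a subpath of the Hamilton path is not an induced path of $G$ --- it can carry up to $dn$ chords, and these chords are spread along the whole connecting segment rather than concentrated at the junctions, so deleting $O(1)$ vertices per junction cannot remove them. A useful sanity check that something must break: if one could reliably chain even \emph{two} blocks per level, the recursion $h(N)\geq 2\,h(\sqrt{N})-O(1)$ would yield $h(N)=\Omega(\log N)$, contradicting the construction of Cou\"etoux, Defrain and Raymond of $2$-degenerate $n$-vertex graphs with a Hamilton path in which every induced path has length $O(\log\log n\cdot\log\log\log n)$. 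So the chaining step is not merely unproved; it is false in general. (The quantitative bookkeeping at the end --- where the $\log d$ in the denominator is supposed to come from --- is also left unresolved, as you acknowledge.)

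For comparison, the paper avoids chaining altogether. It orients the edges of $G$ so that every outdegree is at most $d$ (possible by degeneracy), keeps only the Hamilton-path edges (directed forward) together with the edges oriented forward along the path, and observes that this subgraph is acyclic with outdegree at most $d+1$ and has a directed Hamilton path. A BFS from the first vertex then produces an induced directed path of length $\Omega(\log n/\log d)$ in this forward graph, because a BFS tree with branching $\leq d+1$ on $n$ vertices has that depth and root-to-leaf paths are induced in an acyclic digraph. The only $G$-edges among the vertices of this path that are not path edges must all point backward, so reversing the path edges yields another acyclic digraph of small outdegree with a directed Hamilton path on $\Omega(\log n/\log d)$ vertices, and a second application of the same BFS claim gives an induced path of length $\Omega(\log\log n/\log d)$ in $G$. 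The two nested applications of the BFS claim are what produce the double logarithm, in place of your recursion.
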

\begin{proof}
We start with a simple claim.
\begin{claim}\label{claim}
If $G$ is an acyclic directed graph of maximum outdegree $d$ on $n$  vertices with a directed Hamilton path, then $G$ contains an induced directed path of length at least $\Omega(\frac{\log n}{\log d})$.
\end{claim}
\begin{proof}
 Let $v_1$ be the first vertex of the Hamilton path, and  run the breadth-first search algorithm (BFS algorithm) starting at $v_1$. Since every vertex of $G$ can be reached from $v_1$ by a directed path, the algorithm explores the whole graph and constructs a spanning tree, where each node has at most $d$ children. This tree has $n$ vertices and so it must have depth at least $\Omega(\frac{\log n}{\log d})$. Moreover, by simple properties of the BFS algorithm and the acyclic property of $G$, each directed path from the root to a leaf is induced, thus giving us an induced path of the desired length.   
\end{proof}

Denote the vertices of $G$ by $v_1, \dots, v_n$ in the order they appear on the Hamilton path. Since $G$ is a $d$-degenerate graph, its edges can be directed such that the outdegree of each vertex in the resulting directed graph is at most $d$. Let $G'$ be the subgraph consisting of the edges of the Hamilton path together with all edges $v_i\to v_j$ with $i<j$, and direct all the edges of the Hamilton path from $v_i$ to $v_{i+1}$. Then $G'$ satisfies the conditions of Claim \ref{claim} (with $d+1$ instead of $d$), so it contains an induced path $P$ of length $|P|\geq \Omega(\frac{\log n}{\log d})$. 

Let the vertices of $P$ be $v_{i_1},\dots,v_{i_k}$ with $i_1<\dots<i_k$. Consider the subgraph of $G$ induced on the vertices of $P$. Since $P$ is induced in $G'$, if $v_{i_a}v_{i_b}\in E(G)$ for some $a<b-1$, then  $v_{i_b}\to v_{i_a}$ in $G$. Reverse the direction of the edges $v_{i_a}v_{i_{a+1}}$ for $a=1,\dots,k-1$, and let the resulting directed graph be $P'$. Then $P'$ satisfies the conditions of Claim \ref{claim}, so we find a path $P''\subseteq P'$ which is induced in $P'$, and thus also in $G$, of length $\Omega(\frac{\log |P|}{\log d})\geq \Omega(\frac{\log \log n}{\log d})$. This completes the proof.
\end{proof}

Now, we are ready to prove Theorem~\ref{thm:main}.

\begin{proof}[Proof of Theorem~\ref{thm:main}.]
Let $d=(\log \log n)^{2s}$. If $G$ is a $d$-degenerate graph, then by Lemma~\ref{lemma:degeneracy}, $G$ contains an induced path of length $\Omega\big(\frac{\log \log n}{\log d}\big)=\Omega\big(\frac{\log \log n}{2s\log \log \log n}\big)=\Omega\big(\frac{\log \log n}{\log\log \log n}\big)$, as claimed. If $G$ is not $d$-degenerate, it contains an induced subgraph $G'$ with minimum degree at least $d$. Since $G'$ is also $K_{s, s}$-free,  Lemma~\ref{lemma:min degree} implies that $G'$ contains an induced path of length $d^{1/2s}/2s=\Omega(\log\log n)$. \end{proof}


\begin{thebibliography}{99}


\bibitem{ALR} A. Atminas, V.~V. Lozin, and I. Razgon, \textit{Linear time algorithm for computing a small biclique in graphs without long induced paths}, in {\it Algorithm theory---SWAT 2012}, 142--152, Lecture Notes in Comput. Sci., 7357, Springer, Heidelberg.

\bibitem{B} B. Bollob\'as, \textit{Extremal Graph Theory}, Dover Publications, 2004.

\bibitem{CDR} B. Cou\"etoux, O. Defrain, and J.-F. Raymond, \textit{A quasi-optimal upper bound for induced paths in sparse graphs,} \textit{preprint}, arXiv:2507.22509.

\bibitem{DER} J. Duron, L. Esperet, and J.-F. Raymond, \textit{Long induced paths in sparse graphs and graphs with forbidden patterns,} \textit{preprint}, arXiv:2411.08685.

\bibitem{DR} O. Defrain and J.-F. Raymond, \textit{Sparse graphs without long induced paths}, J. Combin. Theory Ser. B {\bf 166} (2024), 30--49.

\bibitem{ESS} P. Erd\H os, M.~E. Saks, and V.~T. S\'os, \textit{Maximum induced trees in graphs}, J. Combin. Theory Ser. B {\bf 41} (1986), 61--79.

\bibitem{FLS} J. Fox, P.-S. Loh, and B. Sudakov,
\textit{Large induced trees in $K_r$-free graphs},
J. Combin. Theory Ser. B
{\bf 99} (2009), 494--501.

\bibitem{GRS} F. Galvin, I. Rival, and B. Sands, \textit{A Ramsey-type theorem for traceable graphs}, J. Combin. Theory Ser. B {\bf 33} (1982), 7--16.

\bibitem{HMST}
Z.~Hunter, A.~Milojevi\'c, B.~Sudakov and I.~Tomon,
\textit{K\H{o}v\'ari-{S}\'os-{T}ur\'an theorem for hereditary families},
preprint, arXiv:2401.10853.

\bibitem{NOdM} J. Ne\v{s}et\v{r}il and P. Ossona de Mendez, \textit{Sparsity}, Springer, 2012.

\bibitem{S}
S.~Smorodinsky,
\textit{A survey of {Z}arankiewicz problem in geometry},
preprint, arXiv:2410.03702.


\end{thebibliography}
\end{document}